\newcommand{\QED}{\hspace*{\fill}\rule{2.5mm}{2.5mm}}
\newtheorem{theorem}{Theorem}[section]
\newtheorem{definition}{Definition}[section]
\newenvironment{proof}{\noindent{\bf Proof\ }}{\QED\\}
\newcommand{\R}{\mathbb{R}}
\newcommand{\Q}{\mathbb{Q}}
\newtheorem{lemma}{Lemma}[section]
\begin{document}
\begin{center}
\vspace{0.5cm} {\large \bf ``Closure of random samples"}\\
\vspace{1cm} Reza Hosseini, Simon Fraser University\\
Statistics and actuarial sciences, 8888 University Road,\\
Burnaby, BC, Canada, V65 1S6\\
reza1317@gmail.com
\end{center}

\begin{abstract}
In this paper we show that the closure of a random sample for a
$k$-dimensional random vector is almost surely a deterministic set
of all heavy points of the distribution. A heavy point is defined to
be a point for which all its neighborhoods have positive
probability.
\end{abstract}

\noindent Keywords: Distribution function; random sample; closure;
almost surely

\section{Introduction}

Although a random variable is deterministic and random by
definition, one can find deterministic features specially when an
infinite random sample is available. A deterministic feature can be
an event with probability 1. Kolmogorov 0-1 Lemma (See
\cite{breiman} for example) provides a general result for finding
events with probability 1. One can also look for random sets that
are almost surely deterministic. This paper finds one of the
simplest of such almost sure deterministic sets. We show that the
closure of a random sample for a $k$-dimensional random vector is
almost surely the deterministic set of all heavy points of the
distribution. A heavy point is defined to be a point for which all
its neighborhoods have positive probability.

\section{Closure of random samples}

Suppose $(\Omega, \Sigma, P)$ a probability space and  $X: \Omega
\rightarrow \R^k$ a random vector with respect to the Borel
sigma-algebra on $\R^k$ with distribution function $F_X$. We begin
by a definition.

\begin{definition}
$x \in \R^k$ is called a heavy point of a distribution function
$F_X$ if
\[P(X \in U)>0,\]
if $U$ is an open set and includes $x$. We call the set of all such
points the heavy set of $X$ and denote it by $H(X)$. Also we call
the $N(X)=\R^k-H(X)$ the weightless set or null set of $X$. It is
clear that these sets only depend on the distribution function.
\end{definition}
In the following, we prove a lemma about the properties of such
sets.

\begin{lemma}
Suppose $X$ and $F_X$ are defined as above. Then\\
a) $N(X)$ is open.\\
b) $H(X)$ is closed.\\
c) $P(X \in N(X))=0.$
\end{lemma}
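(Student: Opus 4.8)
The plan is to dispatch (a) and (b) together through a single neighborhood argument plus the complement relation, and then to concentrate the real effort on (c), where the decisive tool is the Lindel\"of (second-countability) property of $\R^k$.

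For (a), I would take an arbitrary $x \in N(X)$. Saying $x$ is not heavy means, by the definition above, that there exists an open set $U \ni x$ with $P(X \in U)=0$. The crucial observation is that this \emph{same} $U$ witnesses the non-heaviness of every one of its points: for any $y \in U$, $U$ is an open set containing $y$ of probability zero, so $y \in N(X)$. Hence $U \subseteq N(X)$, and since every point of $N(X)$ possesses such an open neighborhood lying inside $N(X)$, the set $N(X)$ is open.

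For (b), since $H(X) = \R^k - N(X)$ by definition, it is the complement of an open set and therefore closed; this is immediate from (a), and no further work is needed.

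For (c), the idea is to cover $N(X)$ by the zero-probability neighborhoods supplied by the proof of (a) and then pass to a countable subcover. For each $x \in N(X)$ fix an open $U_x \ni x$ with $P(X \in U_x)=0$; the family $\{U_x : x \in N(X)\}$ is then an open cover of $N(X)$. Because $\R^k$ is second countable, every open cover admits a countable subcover, so I can extract a sequence $U_{x_1}, U_{x_2}, \dots$ still covering $N(X)$, and countable subadditivity of $P$ yields
\[
P(X \in N(X)) \le P\Big(X \in \bigcup_{n} U_{x_n}\Big) \le \sum_{n} P(X \in U_{x_n}) = 0 .
\]
The main obstacle is exactly this reduction from an uncountable to a countable union: a priori the uncountably many zero-probability pieces could assemble into a set of positive measure, and it is the topological structure of $\R^k$ (its Lindel\"of property) that rescues the argument by letting subadditivity take over. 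Everything else is routine.
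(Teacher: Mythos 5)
Your proof is correct, and parts (a) and (b) coincide with the paper's argument; in fact your observation that the \emph{same} open set $U$ witnesses non-heaviness of every one of its points is slightly cleaner than the paper's version, which shrinks to a smaller ball around $y$ unnecessarily. For part (c) you take a genuinely different route to the key step, namely reducing an uncountable union of null sets to a countable one. The paper builds the countable subcover by hand: it enumerates the rational points $q_1,q_2,\dots$ of $N(X)$, attaches to each $q_i$ a zero-probability ball $Ba(q_i,r_i)$ whose radius is half the supremum of admissible radii, and then checks with explicit estimates ($|x-q_i|<r/4$ and $Ba(q_i,3r/4)\subset Ba(x,r)$, hence $r_i\ge 3r/8$) that these balls cover $N(X)$; countable subadditivity finishes. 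You instead invoke the Lindel\"of property of $\R^k$, a consequence of second countability, to extract the countable subcover abstractly. Both are valid: the paper's construction is elementary and self-contained (it is essentially an inline proof of the Lindel\"of property for this particular cover), while yours is shorter, transfers verbatim to any second countable --- indeed any hereditarily Lindel\"of --- space, and sidesteps the radius bookkeeping in which the paper's write-up actually slips (it asserts $r_i \ge 3r/8 > r/2$, which is false as stated; the conclusion $x \in Ba(q_i,r_i)$ survives because $3r/8 > r/4 > |x-q_i|$). One small point of care in your version: the Lindel\"of property as usually stated concerns covers of the whole space, whereas $\{U_x\}$ covers only $N(X)$; you should either remark that second countable spaces are hereditarily Lindel\"of, or apply the property to the open subspace $\bigcup_x U_x$, which is second countable in its own right. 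Either patch is one line, so this is a presentational gap rather than a mathematical one.
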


\begin{proof}
We will denote an open ball of radius $\epsilon>0$ around $x$ by
$Ba(x,\epsilon)$.\\
a) For every $x \in N(X)$ by definition of $H(X)$ there exist and
$\epsilon>0$ such that
\[P(X \in Ba(x,\epsilon))=0.\]
But then it is obvious that any $y \in Ba(x,\epsilon)$ also belongs
to $N(X)$ (simply find $\epsilon'$ such that $Ba(y,\epsilon') \subset Ba(x,\epsilon)$) and the proof is complete.\\
b) Straight forward corollary of a)\\
c) Let $Q'=\Q \cap N(X)=\{q_1,q_2,\cdots\}$ and
\[r_i'=\sup_{r \geq 0} P(X \in Ba(q_i,r))=0.\]
Such $r_i'$ is positive since $q_i \in N(X)$. Then let
$r_i=\frac{r_i'}{2}<r_i'$. We have $P(X \in Ba(q_i,r_i))=0.$ Let
\[N=\cup_{i=1}^{\infty} Ba(q_i,r_i) \subset N(X).\]
Since $N$ is a countable union of sets with zero probability, it's
probability is also zero. It only remains to show that $N(X) \subset
N$. Take $x \in N(X)$ then there exists $r>0$ such that
\[P(X \in Ba(x,r))=0.\] Take a rational number in $Ba(x,r)$ such
that $|x-q|<\frac{r}{4}$ then $q=q_i \in Q'$ and
$Ba(q_i,\frac{3r}{4}) \subset Ba(x,r)$ and hence \[r_i' \geq
\frac{3r}{4} \Rightarrow r_i \geq \frac{3r}{8}> \frac{r}{2}.\] Hence
$x \in Ba(q_i,r_i) \subset N.$
\end{proof}

Before to prove the main theorem we point out an  important property
of closed subsets of $\R^k$. A topological space is called separable
if it posses a countable dense subset. It is well-known that any
topological space with a countable basis is separable. See
\cite{kelly} page 49 for example.

\begin{lemma}
Every closed subset of $\R^k$ is separable.
\end{lemma}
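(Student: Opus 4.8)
The plan is to derive separability from second countability, invoking the fact quoted just before the statement: a topological space with a countable basis is separable. First I would observe that $\R^k$ itself has a countable basis. Indeed, the collection
\[
\mathcal{B}=\{Ba(q,r): q \in \Q^k,\ r \in \Q,\ r>0\}
\]
of open balls with rational centre and rational radius is countable, and every open subset of $\R^k$ is a union of members of $\mathcal{B}$ (around each of its points one can inscribe such a ball). This is the standard second-countability of Euclidean space.

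Next, let $C$ be a closed subset of $\R^k$, equipped with the subspace topology. I would show that
\[
\mathcal{B}_C=\{B \cap C: B \in \mathcal{B}\}
\]
is a countable basis for $C$. Countability is immediate, since $\mathcal{B}_C$ is the image of the countable family $\mathcal{B}$ under the map $B \mapsto B \cap C$. That it is a basis follows directly from the definition of the subspace topology: every set open in $C$ is of the form $U \cap C$ with $U$ open in $\R^k$; writing $U$ as a union of balls drawn from $\mathcal{B}$ and intersecting with $C$ exhibits $U \cap C$ as a union of members of $\mathcal{B}_C$. Applying the cited result to $C$ then yields that $C$ is separable, which completes the argument.

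The step requiring the most care, and the one I expect to be the crux, is the verification that the subspace inherits a countable basis; beyond this the claim is a routine consequence of second countability. I would also point out that closedness plays no role in the proof: the same reasoning shows that \emph{every} subset of $\R^k$ is separable. Stating the lemma only for closed sets is harmless, however, since that is precisely the form needed for the later application to the closure of a random sample, which is a closed subset of $\R^k$.
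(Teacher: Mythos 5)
Your proof is correct and is essentially the paper's own first proof: restrict the countable basis of rational balls (the paper phrases it with rational intervals) to the subspace, note that this trace family is a countable basis for the subspace topology, and invoke the quoted fact that a space with a countable basis is separable; your remark that closedness plays no role is also accurate, since second countability is hereditary. (The paper additionally records a second, elementary proof that builds a countable dense set by selecting one point of $E$ from each rational ball of radius $1/n$ meeting $E$, but that is merely an alternative to, not a correction of, your argument.)
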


\begin{proof}
We offer two proofs one based on the properties of separable spaces
mentioned above and an elementary proof. Suppose $E$ is closed in $\R^k$.\\
Proof 1. $E$ admits a countable basis consisting of all intervals with rational endpoints in $E$. Then the proof is complete since every topological
space with a countable basis is separable.\\
Proof 2. Consider the $k$-dimensional rational space $\Q^k$ and let
\[Q_n=\{Ba(q,1/n),\;q \in \Q^k\},\;n=1,2,\cdots\]
and for every $n$ we define $D_n$ as follows. If $E \cap Ba(q,1/n)$
is non-empty choose one element of this set $x_{q,1/n}$ and put in
$D_n$. Hence $D_n$ is countable and so is
$D=\cup_{n=1}^{\infty}D_n$. We also claim $D$ is dense in $E$.
Suppose $e \in E$ then it is obvious that for every $\epsilon>0$
there exist $q,n$ such that
\[Ba(q,1/n) \subset Ba(x,\epsilon)\]
but by construction we picked an element of $E$ from $Ba(q,1/n)$.
\end{proof}

\begin{lemma}
 Suppose $X$ a $k$-dim random vector, $x \in H(X)$ and $X_1,X_2,\cdots$ a random sample from $X$. Let $E=\{X_1,X_2,\cdots\}$
 then $x \in \overline{E}, \;\;a.s..$
\end{lemma}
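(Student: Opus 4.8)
The plan is to translate membership in the closure into a countable family of events, each having probability one, and then intersect. First I would observe that $x \in \overline{E}$ holds precisely when every open ball $Ba(x,\epsilon)$ meets $E$, i.e.\ when for each $\epsilon>0$ there is an index $n$ with $X_n \in Ba(x,\epsilon)$. Since the balls $Ba(x,1/m)$ shrink to $x$ and every $Ba(x,\epsilon)$ contains $Ba(x,1/m)$ once $m$ is large, it suffices to verify this condition only along the countable sequence of radii $\epsilon = 1/m$, $m=1,2,\cdots$.

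Next I would fix $m$ and set $p_m = P(X \in Ba(x,1/m))$. Because $x \in H(X)$ and $Ba(x,1/m)$ is an open set containing $x$, the definition of a heavy point yields $p_m > 0$. Let $A_m$ be the event that $X_n \in Ba(x,1/m)$ for at least one $n$. By independence of the sample, $P(A_m^c) = \lim_{N\to\infty}(1-p_m)^N = 0$, so $P(A_m)=1$. Equivalently, since the events $\{X_n \in Ba(x,1/m)\}$ are independent with $\sum_n p_m = \infty$, the second Borel--Cantelli lemma shows that infinitely many of them occur almost surely, which again gives $P(A_m)=1$.

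Finally I would intersect the countably many events: put $A = \bigcap_{m=1}^{\infty} A_m$. As a countable intersection of probability-one events, $P(A)=1$. On the event $A$, given any $\epsilon>0$ I choose $m$ with $1/m < \epsilon$; then $A_m$ supplies a sample point in $Ba(x,1/m) \subset Ba(x,\epsilon)$, so every neighborhood of $x$ contains a point of $E$ and hence $x \in \overline{E}$. This establishes $x \in \overline{E}$ almost surely.

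I do not expect a genuine obstacle here; the only point demanding care is the passage from the uncountable family of radii $\epsilon$ to the countable subfamily $1/m$, since a direct intersection over all $\epsilon>0$ need not behave well measure-theoretically. The monotonicity of the balls makes this reduction clean, and the positivity of $p_m$ guaranteed by heaviness is exactly what forces each $P(A_m)=1$.
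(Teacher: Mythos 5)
Your proposal is correct and follows essentially the same route as the paper: define $A_m$ as the event that some sample point lands in $Ba(x,1/m)$, use independence and $p_m>0$ to get $P(A_m)=1$, and intersect over the countable family of radii $1/m$. If anything, your write-up is slightly more careful than the paper's, since you express $P(A_m^c)$ as $\lim_{N\to\infty}(1-p_m)^N$ rather than the informal $(1-p_m)^{\infty}$, and you make explicit the reduction from all $\epsilon>0$ to the countable sequence of radii.
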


\begin{proof}
 Let $P(X \in Ba(x,1/n))=p_n>0$ and \[A_n=\{\omega \in \Omega| \exists i,\;\;X_i(\omega) \in Ba(x,1/n)\}.\]
 Then \[A=\cap_{i=1}^{\infty}A_i=\{\omega \in \Omega|\exists i,\;\; X_i(\omega) \in
Ba(x,1/n),\;\;n=1,2,\cdots\}.\] We need to show $P(A)=1.$\\
\[P(A_n)=P(\exists i,\;X_i \in Ba(x,1/n))=1-P(\forall i,\;X_i \notin Ba(x,1/n))=\]
\[1-\prod_{i=1}^{\infty}P(X_i \notin B(x,1/n))=1-(1-p_n)^{\infty}=1-0.\]
Now $P(A)=\lim_{n \rightarrow \infty} P(A_n)=1.$

\end{proof}

\begin{theorem}
Suppose $X_1,X_2,\cdots$ be a random sample from $X$ a
$k$-dimensional random vector. Let $E=\{X_1,X_2,\cdots\}$ and
suppose $\overline{E}$ is the closure of $E$ with respect to the
Euclidean distance topology of $\R^k$. Then $\overline{E}=H(X),\;\;$
almost surely.
\end{theorem}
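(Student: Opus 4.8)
The plan is to establish the set equality $\overline{E}=H(X)$ almost surely by proving the two inclusions separately, each on a probability-$1$ event, and then intersecting those two events. The inclusion $\overline{E}\subseteq H(X)$ is the easier direction and I would dispatch it first. By part c) of the first lemma we have $P(X\in N(X))=0$, so each sample point satisfies $P(X_i\in N(X))=0$, and the event $\bigcup_{i=1}^{\infty}\{X_i\in N(X)\}$ is a countable union of null sets, hence null. On its complement, which has probability $1$, every $X_i$ lies in $H(X)$, so $E\subseteq H(X)$. Since $H(X)$ is closed by part b) of the first lemma, taking closures gives $\overline{E}\subseteq H(X)$ almost surely.

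For the reverse inclusion $H(X)\subseteq\overline{E}$, the third lemma already shows that for any \emph{fixed} point $x\in H(X)$ one has $x\in\overline{E}$ almost surely. The difficulty is that $H(X)$ may be uncountable, and an uncountable intersection of probability-$1$ events need not have probability $1$. This is the main obstacle, and it is precisely what the separability lemma is designed to overcome. Since $H(X)$ is a closed subset of $\R^k$, the second lemma provides a countable dense subset $\{h_1,h_2,\dots\}\subseteq H(X)$. Applying the third lemma to each $h_j$ yields $P(h_j\in\overline{E})=1$, and the countable intersection $\bigcap_{j=1}^{\infty}\{h_j\in\overline{E}\}$ still has probability $1$.

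On this probability-$1$ event, all of the points $h_j$ belong to $\overline{E}$, which is a closed set; hence $\overline{\{h_1,h_2,\dots\}}\subseteq\overline{E}$. Because $\{h_j\}$ is dense in $H(X)$ and $H(X)$ is closed, we have $\overline{\{h_1,h_2,\dots\}}=H(X)$, giving $H(X)\subseteq\overline{E}$ almost surely. Finally I would intersect the two probability-$1$ events obtained above; on their intersection, which again has probability $1$, both inclusions hold simultaneously, so $\overline{E}=H(X)$ almost surely, completing the proof. The only subtlety worth stating explicitly in the write-up is the reduction from the fixed-point statement of the third lemma to the set-level statement via the countable dense subset, since that is where the separability lemma does the essential work.
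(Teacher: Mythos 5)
Your proposal is correct and follows essentially the same route as the paper's own proof: the inclusion $H(X)\subseteq\overline{E}$ via the separability lemma, a countable dense subset, and countably many applications of the fixed-point lemma, and the inclusion $\overline{E}\subseteq H(X)$ via $P(X\in N(X))=0$ together with the closedness of $H(X)$. If anything, your write-up is slightly more careful than the paper's in making explicit both the passage from $E\subseteq H(X)$ to $\overline{E}\subseteq H(X)$ and the final intersection of the two probability-$1$ events.
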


\begin{proof}
(i) $H(X) \subset \overline{E},\;a.s.$: Since $H(X)$ is closed, it
is separable and take a dense countable subset of $H(X)$,
\[H'=\{h_1,h_2,\cdots\}.\]
Let $C_i=\{\omega \in \Omega|h_i \in \overline{E}\}$ and
$C=\cap_{i=1}^{\infty} C_i$. Then $P(C_i)=1$ by the previous lemma
for each $i=1,2,\cdots.$ But then $P(C)=1$ which means $H' \subset
\overline{E},\;a.s.$ and hence
$H=\overline{H'} \subset \overline{E}$.\\
(ii) $\overline{E} \subset H(X),\;$ almost surely. We only need to
show $E \subset H(X),\;$ almost surely. But we showed $P(X_i \in
H(X))=1-P(X \in N(X))=1-0,\;i=1,2,\cdots$. Let $A_i=\{\omega \in
\Omega | X_i \in H(X)\}$ and $A=\cap_{i=1}^{\infty} A_i$. Then $A=E
\subset H(X)$ and
\[P(A)=\lim_{n \rightarrow \infty} P(A_n)=1.\]
\end{proof}

\bibliographystyle{plain}
\bibliography{mybibreza}

\begin{thebibliography}{1}

\bibitem{breiman}
L.~Breiman.
\newblock {\em Probability.}
\newblock SIAM, 1992.

\bibitem{kelly}
John~L. Kelly.
\newblock {\em General Topology}.
\newblock Springer, 1975.

\end{thebibliography}
\end{document}